\documentclass{amsart}
\usepackage{amssymb, amsthm, graphicx}
\newtheorem{theorem}{Theorem}[section]
\newtheorem{lemma}[theorem]{Lemma}

\newtheorem{prop}[theorem]{Proposition}

\theoremstyle{definition}

\theoremstyle{remark}
\newtheorem{remark}[theorem]{Remark}

\numberwithin{equation}{section}



%
%
\newcommand{\Li}{{\operatorname{Li_2}}}
\newcommand{\li}{{\operatorname{li_2}}}
\newcommand{\Real}{{\operatorname{Re}}}
\newcommand{\Imag}{{\operatorname{Im}}}
\newcommand{\Vol}{{\operatorname{Vol}}}

\begin{document}
\title[Volume formulas for a spherical tetrahedron]{Volume formulas for a spherical  tetrahedron}
\author[Jun Murakami]{Jun Murakami}
\address{
Department of Mathematics\\ 
Faculty of Science and Engineering\\ 
Waseda University\\
3-4-1 Ohkubo, Shinjuku-ku\\ 
Tokyo 169-8555, JAPAN}
\email{murakami@waseda.jp}
\thanks{This research was partially supported by Grant-in-Aid for Scientific Research(C) 22540236 from JSPS. }
\keywords{tetrahedron, volume, spherical space}
\subjclass{Primary 51M25; Secondary 52A38, 26B15}
\begin{abstract}
The present paper gives two concrete formulas for the volume of an arbitrary spherical tetrahedron, which is in a 3-dimensional spherical space of constant curvature $+1$.  
One formula is given in terms of dihedral angles, and another one is given in terms of edge lengths.  
\end{abstract}
\maketitle
\section*{Introduction}
The calculation of the volume of an arbitrary tetrahedron in a 3-space of non-zero constant curvature is rather hard, and the first result is given by \cite{CH} in 1999 for hyperbolic tetrahedra.   
The papers \cite{MY} and \cite{MU} gave another formulas for hyperbolic tetrahedra, which are implicitly based on the quantum $6j$-symbol.   
Moreover, it was stated in \cite{MY} that an adequate analytic continuation of the obtained formula also applicable for a spherical tetrahedron.  
But, the formula is given by multi-valued functions, and it is not   described which stratum we should select for actual computation.  
On the other hand, volumes of spherical tetrahedra of special shapes are given by many people from old times,   
and the most recent work is \cite{KMP}, which gives a formula for a spherical tetrahedron having a small symmetry.  
\par
In the present paper, volume formulas for a spherical tetrahedron $T$  of general shape are given in Theorems 1.1 and 1.2.  
The formula in Theorem 1.1 is given in terms of dihedral angles, and the formula in Theorem 1.2  is given in terms of edge lengths.  
These formulas are obtained by improving those in \cite{MY}, \cite{MU},  and, by using the Schl\"afli differential equality, 
it is shown that the new formulas actually give the volume of  $T$ modulo $2\, \pi^2$.  
Please note that $2 \, \pi^2$ is the volume of $S^3$ with radius 1, which is the universal cover of any 3-dimensional spherical space of constant curvature $+1$.  
Since $T$ can be included in a 3-dimensional hemisphere, the volume of $T$ is less than $\pi^2$ and so we can compute the volume of $T$ actually from the formulas in Theorems 1.1 and 1.2.  
\section{Volume formulas}
\subsection{Volume formula in terms of dihedral angles}
Let $T$ be a spherical tetrahedron  and $\theta_1$, $\theta_2$, $\cdots$, $\theta_6$ be its dihedral angles at edges $e_1$, $e_2$, $\cdots$, $e_6$ respectively given in Figure 1.  
We assume that
$0 < \theta_j < \pi$ for $j = 1$, $2$, $\cdots$,  $6$.  
Let $a_1 = e^{i\theta_1}$, $a_2 = e^{i\theta_2}$, $\cdots$, $a_6 = e^{i\theta_6}$, and  
\begin{figure}[htb]
$$
\begin{matrix}
\quad e_1& & & \quad e_3 \\
& & \quad e_2 & \\[9pt]
& e_5 & & \\[6pt]
e_6 & & & e_4
\end{matrix}
\hskip-4.2cm
\raisebox{-1.3cm}{\includegraphics[scale=0.6]{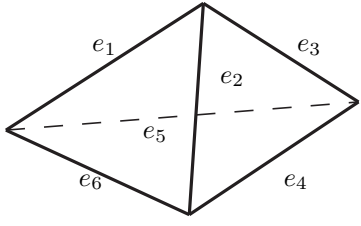}}
$$
\caption{Edges of $T$}
\label{figure:tetrahedron}
\end{figure}
\begin{multline*}
L(a_1, a_2, \cdots, a_6, z) 
=
\\
\dfrac{1}{2} \, \Big(
\Li(z) + \Li(a_1^{-1} \,a_2^{-1}\, a_4^{-1} \,a_5^{-1}\, z) + \Li( a_1^{-1} \,a_3^{-1} \,a_4^{-1} \,a_6^{-1} \, z) 
+ \Li(a_2^{-1} \,a_3^{-1} \,a_5^{-1} \,a_6^{-1} \,z) 
\\
- \Li(-a_1^{-1} \,a_2^{-1} \,a_3^{-1} \,z) - \Li(-a_1^{-1}\, a_5^{-1} \,a_6^{-1}\, z)
- \Li(-a_2^{-1} \,a_4^{-1} \,a_6^{-1} \,z) 
\\
- \Li(-a_3^{-1} \,a_4^{-1} \,a_5^{-1} \,z)
 +\sum_{j=1}^3\,
\log a_j \,\log a_{j+3}\Big),
\end{multline*}
where $\Li(z)$ is the dilogarithm function defined by analytic continuation of the following integral:
\begin{equation}
\Li(x) =
-\int_0^x \dfrac{\log(1-t)}{t} \, dt \qquad
\text{for a real number $x < 1$.}
\label{eq:dilog}
\end{equation}
The  analytic continuation of the right-hand side integral defines a multi-valued complex function $\li(z)$, and let $\Li(z)$ be the principal branch of $\li(z)$ which is the analytic continuation of \eqref{eq:dilog} on the region 
$\mathbb C \setminus \{x \in \mathbb R \mid x \geq 1 \}$.  
We also fix the principal branch of the log function as usual by the branch cut along the negative real axis.  
\par
We define the auxiliary parameter $z_0$ as follows:
\begin{equation}
z_0 = \dfrac{-q_1 + \sqrt{q_1^2 - 4 \,q_0\, q_2}}{2\, q_2},
\label{eq:z}
\end{equation}
where
\begin{equation*}
\begin{aligned}
q_0 &= 
 a_1 \, a_4 + a_2 \, a_5 + a_3 \, a_6
+ a_1 \, a_2 \, a_6 + a_1 \, a_3 \, a_5 + a_2 \, a_3 \, a_4 
\\
&\qquad\qquad\qquad\qquad\qquad\qquad \qquad
\qquad\qquad\quad
+ a_4 \, a_5 \, a_6 + a_1 \, a_2 \, a_3 \, a_4 \, a_5 \, a_6,
\\
q_1 &= -
(a_1 -a_1^{-1})(a_4 - a_4^{-1}) 
- (a_2 - a_2^{-1})(a_5 - a_5^{-1})
- (a_3 - a_3^{-1}) (a_6 - a_6^{-1}),
\\
q_2 &=
a_1^{-1}  a_4^{-1} + a_2^{-1} a_5^{-1} + a_3^{-1} a_6^{-1}
+ a_1^{-1}a_2 ^{-1} a_6^{-1} + a_1 ^{-1} a_3^{-1}a_5 ^{-1}+\\&\qquad\qquad\qquad\qquad\qquad 
a_2 ^{-1}a_3 ^{-1} a_4^{-1} + a_4^{-1} a_5^{-1} a_6 ^{-1}+
a_1^{-1} a_2 ^{-1} a_3 ^{-1} a_4 ^{-1}a_5 ^{-1} a_6^{-1}
.  
\end{aligned}
\label{eq:q}
\end{equation*}
Then $z_0$ is a solutions of 
\begin{equation} 
\exp\left(2 \, z \, \dfrac{\partial L}{\partial z}\right) = 1,
\label{eq:eqz}
\end{equation}
where $$
\exp\left(2 \, z \, \dfrac{\partial L}{\partial z}\right) 
=
\dfrac{(a_1 \, a_2 \, a_3+ z) \, (a_1\, a_5 \, a_6 +  z) \, 
(a_2\, a_4 \, a_6 + z) \, (a_3\, a_4 \, a_5 + z)}
{(1-z) \, (a_1 \, a_2 \, a_4\, a_5 - z) \, 
(a_1 \, a_3\, a_4 \, a_6 - z) \, (a_2\, a_3 \, a_5 \, a_6 - z)}
.
$$  
\par
Now we state the main result of this paper.  
\medskip
\par\noindent
\begin{theorem}  
Let $T$ be a spherical tetrahedron with dihedral angles $\theta_1$, $\theta_2$, $\cdots$, $\theta_6$ at the edges $e_1$, $e_2$, $\cdots$ $e_6$ given in Figure \ref{figure:tetrahedron}.  
Let $a_j = e^{i\theta_j}$ for $j = 1$, $2$, $\cdots$, $6$  and
let $\Vol(T)$ be the volume of $T$.  
Then 
\begin{multline*}
\Vol(T) = -\Real( L(a_1, a_2, \cdots, a_6, z_0) )
+
 \pi \left(\! \arg (-q_2)
 +
 \dfrac{1}{2} \, 
 \sum_{j=1}^6\, \theta_j\!\right)
 -\dfrac{3}{2}\, \pi^2
 \\
 \mod \ 2\, \pi^2 ,
\end{multline*}
where $\Real(z)$ is the real part of $z$ and $z_0$, $q_2$ given by \eqref{eq:z}.  
\end{theorem}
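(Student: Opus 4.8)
The plan is to follow the standard strategy for such volume formulas: verify that the proposed function, call it $V(\theta_1,\dots,\theta_6)$, satisfies the Schl\"afli differential equality, and then pin down the additive constant by evaluating at one explicit reference tetrahedron. First I would treat the right-hand side as a function of the six dihedral angles through the substitution $a_j = e^{i\theta_j}$, where $z_0$ is the branch of the solution of \eqref{eq:eqz} selected by \eqref{eq:z}, so that $z_0$ itself is a (locally analytic) function of the $\theta_j$. The key simplification is that, because $z_0$ solves $2z\,\partial L/\partial z = 2\pi i\, k$ for an appropriate integer $k$ (this is what \eqref{eq:eqz} encodes after taking $\log$), the total derivative of $L(a_1,\dots,a_6,z_0)$ with respect to $\theta_j$ reduces to the partial derivative $\partial L/\partial\theta_j$ evaluated at $z=z_0$, with the $z_0$-dependence contributing only an imaginary multiple of $d\theta_j$; taking real parts kills that term. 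This is the usual ``the critical point moves but the value is stationary'' phenomenon, and it is the technical heart of why $z_0$ is defined the way it is.

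Next I would compute $\partial L/\partial\theta_j$ explicitly. Using $\Li'(x) = -\log(1-x)/x$ and the chain rule, each dilogarithm term in $L$ contributes a sum of logarithms of the linear factors $(1-z)$, $(a_1a_2a_4a_5 - z)$, $(-a_1a_2a_3 - z)$, etc.\ — exactly the eight factors appearing in the closed form for $\exp(2z\,\partial L/\partial z)$ — together with the bilinear term $\sum \log a_j \log a_{j+3}$ whose $\theta_j$-derivative is elementary. Collecting terms, $\Real(\partial L/\partial\theta_j)$ should come out to a combination of $\log$-absolute-values of those factors at $z=z_0$; after adding the derivative of the correction term $\pi(\arg(-q_2) + \tfrac12\sum\theta_j)$, the Schl\"afli equality $dV = -\tfrac12 \sum_j \ell_j\, d\theta_j$ demands that this combination equal $-\tfrac12 \ell_j$, where $\ell_j$ is the length of edge $e_j$. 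So I would need the expression of edge lengths in terms of the $a_j$ and $z_0$; this is where the Gram matrix / $6j$-symbol bookkeeping from \cite{MY}, \cite{MU} enters, identifying $\cos\ell_j$ (equivalently $e^{i\ell_j}$) with an appropriate ratio of the linear factors evaluated at $z_0$. Granting that identification (which is the content of the ``improvement'' of the earlier formulas alluded to in the introduction), the Schl\"afli check becomes a finite algebraic identity among the $a_j$ and $z_0$.

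Then I would fix the constant. The right-hand side, modulo $2\pi^2$, is continuous on the connected space of spherical tetrahedra, and the Schl\"afli computation shows its differential agrees with $d\Vol(T)$; hence $V(T) - \Vol(T)$ is constant modulo $2\pi^2$. To evaluate it I would degenerate $T$ to a distinguished point — e.g.\ let all $\theta_j \to$ the values of the regular spherical tetrahedron, or better, a degenerate limit (all $\theta_j\to\pi$, where $T$ fills out and $\Vol \to \pi^2$, or a totally degenerate flat limit where $\Vol\to 0$) — and compute $z_0$, $q_2$, and $L$ in that limit using standard dilogarithm special values ($\Li(1) = \pi^2/6$, $\Li(-1) = -\pi^2/12$, $\Li(0)=0$, and the reflection formula). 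Matching the two sides in this limit forces the constant to be $-\tfrac32\pi^2$, and the stated ``$\mod 2\pi^2$'' plus the remark that $\Vol(T) < \pi^2$ (so $T$ sits in a hemisphere) makes the formula unambiguous for actual computation.

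The main obstacle I expect is the Schl\"afli verification itself: correctly matching the eight logarithmic terms coming from $\partial L/\partial\theta_j$ against the edge-length expressions, keeping track of which branch of $\log$ and $\arg$ is in force so that the $\pi(\arg(-q_2)+\tfrac12\sum\theta_j)$ correction has the right jump structure, and confirming that the $d z_0$ terms are purely imaginary so that $\Real$ annihilates them. A secondary but real difficulty is the constant-fixing step: the degenerate limits are exactly where $q_1^2 - 4q_0q_2$ may vanish or where $z_0$ approaches a branch point of $\Li$, so the limit has to be taken carefully, possibly along a generic one-parameter family rather than at a symmetric point.
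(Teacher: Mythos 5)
Your overall strategy (Schl\"afli differential check plus fixing the constant at a reference tetrahedron, with the Gram-matrix identification of $e^{i l_j}$ as a ratio of the linear factors at $z_0$) is the same as the paper's. But two steps as you describe them would fail.

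First, the ``stationary critical point'' mechanism is not what happens here. The defining equation \eqref{eq:eqz} says $\exp(2z\,\partial L/\partial z)=1$, and the correct normalization (pinned down in the paper by evaluating at the regular tetrahedron with all $\theta_j=\pi/2$) is $z_0\,\partial L/\partial z|_{z=z_0}=\pi i\neq 0$. Hence the $dz_0$ contribution to $dL(a,z_0)$ is $\pi i\,d(\log z_0)$, and since $\tfrac{1}{z_0}\tfrac{\partial z_0}{\partial\theta_j}$ is a genuinely complex number (not real), this term is \emph{not} purely imaginary and is not killed by taking real parts. Its real part is $-\pi\,\partial(\arg z_0)/\partial\theta_j$, and this is precisely what the $\pi\arg(-q_2)$ term in the statement is there to cancel (one has $\Imag\log z_0=-\arg(-q_2)$ because the numerator of \eqref{eq:z} is a negative real multiple of $q_2^{-1}\cdot$(something); the paper packages this as the counterterm $\pi i\log z_0$ in its function $V$). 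You do differentiate the correction term later, so the bookkeeping could be repaired, but the justification you give for discarding the $dz_0$ term is wrong, and with it the claim that $\Real(\partial L/\partial\theta_j)$ alone must produce $\tfrac12 l_j$. (Also note the Schl\"afli equality in spherical space is $d\Vol=+\tfrac12\sum l_j\,d\theta_j$, not $-\tfrac12\sum l_j\,d\theta_j$.)

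Second, fixing the constant by degenerating $T$ (all $\theta_j\to\pi$, or a flat limit) is exactly where the argument breaks: there $\det G\to 0$, the two roots of \eqref{eq:z} collide, and $z_0$ hits branch points of $\Li$, as you yourself observe. The paper avoids this entirely by evaluating at the interior point $T_{\pi/2}$ (all dihedral angles $\pi/2$, $a_j=i$, $z_0=(1+i)/2$, $\Vol=2\pi^2/16=\pi^2/8$), where every quantity is an explicit algebraic number. This same reference point is also what pins the several integer ambiguities you gloss over as ``branch tracking'': the integer $k$ in $z_0\,\partial L/\partial z=k\pi i$ and the integer shift in $a_1\,\partial U/\partial a_1=i(l_1+k\pi)/2$. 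To propagate these constants from $T_{\pi/2}$ to all of $D_s$ you also need the analyticity of $L(a_1,\dots,a_6,z_0)$ on $D_s$, which the paper gets from $|z_0|<1$ (using that $q_1$ is real, $q_0q_2=|q_0|^2>0$, and $q_1^2-4q_0q_2=16\det G>0$), so that every dilogarithm argument stays inside the unit disk. Without that lemma the ``constant integer'' arguments, and the final analytic continuation from the reference tetrahedron, have no support.
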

%
%
%
%
\subsection{Volume formula in terms of edge lengths}
Let $T$ be a  spherical tetrahedron with edge lengths $l_1$, $l_2$, $\cdots$, $l_6$ at the edges $e_1$, $e_2$, $\cdots$ $e_6$ respectively given  in Figure \ref{figure:tetrahedron}.   
Let $b_j = e^{i \, l_j}$ for $j=1$, $2$, $\cdots$, $6$
and $\widetilde L(b_1, b_2, b_3, b_4, b_5, b_6, z) 
\,=\,
L(-b_4^{-1}$, $-b_5^{-1}$, $ -b_6^{-1}$, $-b_1^{-1}$, $-b_2^{-1}$, $ -b_3^{-1}$, $z)$.  
Then the following formula holds.   
\medskip\par\noindent
\begin{theorem}  
For a spherical tetrahedron $T$ as above, 
\begin{multline*}
\Vol(T) = \Real\Big( \widetilde L(b_1, b_2, \cdots, b_6,  \widetilde z_0)\Big)
- \pi \arg (-\widetilde q_2)
\\
- \sum_{j=1}^6 l_j \, \left.\dfrac{\partial \,\Real\big(\widetilde L(b_1, b_2, \cdots, b_6,  z)\big)}{\partial l_j}\right|_{z = \widetilde z_0}
-
\dfrac{1}{2} \, \pi^2
\mod 2\, \pi^2,
\end{multline*}
where $\widetilde z_0$ and $\widetilde q_2$ are obtained from $z_0$ and $q_2$ in \eqref{eq:z} by substituting $-b_{j\pm3}^{-1}$ to $a_j$ for $j = 1$, $2$, $\cdots$, $6$.  
\end{theorem}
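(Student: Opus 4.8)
The strategy is to derive the edge-length formula from the dihedral-angle formula of Theorem 1.1 by a duality argument: the polar dual $T^*$ of a spherical tetrahedron $T$ is again a spherical tetrahedron, its dihedral angles are $\pi - l_j$ (where $l_j$ are the edge lengths of $T$), and its edge lengths are $\pi - \theta_j$. Consequently $a_j = e^{i\theta_j}$ for $T^*$ becomes $e^{i(\pi - l_j)} = -b_j^{-1}$ up to the index permutation encoded in the definition of $\wL$, which is exactly why $\wL(b_1,\dots,b_6,z) = L(-b_4^{-1},\dots,-b_3^{-1},z)$ and why $\wz_0$, $\wq_2$ are the images of $z_0$, $q_2$ under $a_j \mapsto -b_{j\pm3}^{-1}$. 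So the first step is to set up this polar duality carefully, check the indexing against Figure \ref{figure:tetrahedron}, and record the relation $\Vol(T^*) = -\Real(L)+\pi(\arg(-q_2)+\tfrac12\sum\theta_j) - \tfrac32\pi^2 \bmod 2\pi^2$ written entirely in terms of the $b_j$ and the tilded quantities.

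The second and main step is to pass from $\Vol(T^*)$ back to $\Vol(T)$. These are not equal, so I would use the classical relation between the volume of a spherical simplex and that of its dual — a consequence of the Gauss–Bonnet / Schläfli setup in $S^3$ — which in dimension $3$ takes the form $\Vol(T) + \Vol(T^*) = $ (an explicit expression in the dihedral angles and edge lengths of $T$, essentially a sum of $\tfrac12(\pi-\theta_j)l_j$-type terms plus a constant), or equivalently I would re-run the Schläfli differential argument directly for the right-hand side of the claimed formula. Concretely: call the claimed right-hand side $V(T)$; differentiate $\Real(\wL)$, $\arg(-\wq_2)$ and the correction sum with respect to each $l_j$; use \eqref{eq:eqz} (in its tilded form, satisfied by $\wz_0$) to kill the implicit $\partial \wz_0/\partial l_j$ contributions exactly as in the proof of Theorem 1.1; and show that $dV = \tfrac12 \sum_j \theta_j\, dl_j$, which is the Schläfli differential formula for the spherical tetrahedron written with edge lengths as the independent variables and the dihedral angles as the dependent ones. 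The Legendre-transform shape of the formula — note the subtracted term $\sum l_j\,\partial\Real(\wL)/\partial l_j$ — is precisely what converts the "$\theta$-side" Schläfli relation into the "$l$-side" one, so this computation should close up.

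The third step is to pin down the additive constant. Having shown $dV = d\Vol(T)$ on the (connected) space of spherical tetrahedra, it remains to evaluate both sides at one convenient point — e.g. the regular spherical tetrahedron, or a degenerate limit where $T$ shrinks to a point (all $l_j \to 0$, so $b_j \to 1$) and $\Vol(T) \to 0$ — and check that $V \to 0$ as well modulo $2\pi^2$. This fixes the $-\tfrac12\pi^2$ and the sign of the $\Real(\wL)$ and $\arg$ terms, completing the proof.

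The hard part will be the bookkeeping in the second step: tracking the branch of $\Real(\wL)$ and of $\arg(-\wq_2)$ through the substitution $a_j \mapsto -b_{j\pm3}^{-1}$, making sure the correction term $\sum l_j\,\partial\Real(\wL)/\partial l_j$ has exactly the right coefficient so that the Schläfli formula comes out with no spurious multiple of $\pi^2$, and verifying that $\wz_0$ — defined as a specific root of the quadratic — remains the correct root (not the other one) under the duality substitution, since choosing the wrong branch of the square root in \eqref{eq:z} would shift the answer. I expect the sign and the $\pi^2$-accounting to require the same delicate $\bmod\ 2\pi^2$ analysis already used for Theorem 1.1, now propagated through the polar-duality dictionary.
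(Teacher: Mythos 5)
Your primary route --- polar duality sending $T$ to $T^*$ with dihedral angles $\pi - l_j$, applying Theorem 1.1 to $T^*$, and invoking the classical relation $\Vol(T) + \Vol(T^*) + \tfrac12\sum_j l_j(\pi-\theta_j) = \pi^2$ to convert back --- is exactly the paper's proof, including the final identification of the correction term $\sum_j l_j\,\partial\Real(\wL)/\partial l_j$ with $-\tfrac12\sum_j l_j\theta_j$ via the derivative formula \eqref{eq:length} and Lemma \ref{lemma:imagdelta}. The plan is correct and takes essentially the same approach as the paper.
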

%
%
\section{Proof of the formulas}
\subsection{Gram matrices}
Let $T$ be a spherical tetrahedron with dihedral angles $\theta_1$, $\cdots$, $\theta_6$ as before.  
Let $G$ be the Gram matrix of $T$ defined by
$$
G = 
\begin{pmatrix}
1 & -\cos \theta_1 & -\cos \theta_2 & -\cos\theta_6 \\
-\cos\theta_1 & 1 & -\cos\theta_3 & -\cos\theta_5 \\
-\cos\theta_2 & -\cos\theta_3 & 1 & -\cos\theta_4 \\
-\cos\theta_6 & -\cos\theta_5 & -\cos\theta_4 & 1
\end{pmatrix}.  
$$
An actual computation shows that
the discriminant 
in \eqref{eq:z} is given by
\begin{equation}
q_1^2 - 4 \, q_0 \, q_2 
=
16 \, \det G,   
\label{eq:discriminant}
\end{equation}
which is positive since $T$ is spherical.
It is known\footnote{
The formula \eqref{equation:cosl} comes from the formula in p.8, l.4 of \cite{KMP} applied to the dual tetrahedron $T^*$.
It is a spherical version of the formula just below (5.1) in \cite{U}.  
}
that
\begin{equation}
\cos l_j = 
\dfrac{c_{pq}}{\sqrt{c_{pp} \, c_{qq}}}
\label{equation:cosl}
\end{equation}
and so we have
\begin{equation}
\exp (2\, i\,  l_j) 
=
\dfrac{2 \, c_{pq}^2- c_{pp}\, c_{qq}  +  2 \, i\, c_{pq}\, \sqrt{\det G} \, \sin\theta_j}{c_{pp}\, c_{qq}}
\label{eq:ls}
\end{equation}
by using the formula (5.1) in \cite{U} that is
$
c_{pq}^2 - c_{pp}\, c_{qq} = 
-\det G \, \sin^2 \theta_j  
$.
Here $p$ and $q$ denote the row and column of $G = (g_{ab})$ such that
$g_{p'q'} = -\cos \theta_j$, 
$\{p, q\} = \{1, 2, 3, 4\} \setminus \{p', q'\}$
and $c_{ab}$ is the cofactor of $G$, i.e. 
 $c_{ab} = (-1)^{a+b} \, \det G_{ab}$ where $G_{ab}$ is the submatrix obtained from $G$ by deleting its $a$-th row and $b$-th column.  
\subsection{Some functions and their properties}
Before proving the formulas, we introduce some functions and investigate their properties.  
Let $T$ be an abstract tetrahedron, $\theta_1$, $\theta_2$, $\cdots$, $\theta_6$ be its dihedral angles at the edges $e_1$, $e_2$, $\cdots$, $e_6$ as before, 
and 
\begin{multline*}
D_s = 
\{(\theta_1, \theta_2, \cdots, \theta_6) \in
(0, \pi)^6 \subset \mathbb R^6 \mid
\text{$\theta_1$, $\theta_2$, $\cdots$, $\theta_6$} 
\\
\text{correspond to the dihedral angles of a spherical tetrahedron}\}.   
\end{multline*}
Let $a_j=e^{i\theta_j}$ for $j=1, 2, \cdots, 6$, 
\begin{multline*}
\Delta_0(x,y, z) 
=
\\
-\dfrac{1}{4} \Big(\Li(-x y^{-1} z^{-1}) + \Li(-x^{-1} y z^{-1})
+\Li(-x^{-1} y^{-1} z)+ \Li(-xy  z)\Big),
\end{multline*}
\begin{multline*}
\Delta(a_1, a_2, \cdots, a_6) = 
\Delta_0(a_1, a_2, a_3) + \Delta_0(a_1, a_5, a_6) 
+ 
\Delta_0(a_2, a_4, a_6)
\\
 +\Delta_0(a_3, a_4, a_5)
-\dfrac{1}{2} \,
\sum_{j=1}^6 \big(\log a_j)^2 
,
\end{multline*}
$$
U(a_1, a_2, \cdots, a_6, z) =
 L(a_1, a_2, \cdots, a_6, z) + \Delta(a_1, a_2, \cdots, a_6)
,  
$$
and
\begin{multline*}
V(a_1, a_2, a_3, a_4, a_5, a_6) 
= 
\\
-U(a_1, a_2, a_3, a_4, a_5, a_6, z_0) 
+ 
\pi\, i \, \left(\log z_0 - \sum_{j=1}^6 \, \log a_j\right)
-
\dfrac{13}{6}\, \pi^2
.
\end{multline*}
\par
\begin{lemma}
The function $\Delta(a_1, a_2, \cdots, a_6)$ is analytic on $D_s$ and 
the imaginary part of $4 a_j  \frac{\partial \Delta}{\partial a_j}$ is given by
$$
\Imag\left(4 \, a_j\, \dfrac{\partial \Delta}{\partial a_j}\right)
=
-2 \, \pi.  
$$
\label{lemma:imagdelta}
\end{lemma}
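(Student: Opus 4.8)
The plan is to reduce the statement to an elementary computation about the imaginary parts of logarithms and dilogarithms evaluated on the unit circle. First I would note that each of the four $\Delta_0$-summands in $\Delta$ has the form
$$
\Delta_0(x,y,z) = -\tfrac14\Big(\Li(-xy^{-1}z^{-1}) + \Li(-x^{-1}yz^{-1}) + \Li(-x^{-1}y^{-1}z) + \Li(-xyz)\Big),
$$
and that when $x,y,z$ lie on the unit circle (i.e.\ $x=a_p$, etc., with $\theta\in(0,\pi)$), the four arguments $-xy^{-1}z^{-1}$, $-x^{-1}yz^{-1}$, $-x^{-1}y^{-1}z$, $-xyz$ also lie on the unit circle. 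Analyticity of $\Delta$ on $D_s$ then follows once we check that none of these eight arguments (across all four $\Delta_0$ terms) hits the branch point $1$ of $\Li$ for $(\theta_1,\dots,\theta_6)\in D_s$; since $D_s$ is open and the bad locus is a proper real-analytic subset, this is where a small geometric input is needed — I expect the cleanest route is to observe that an argument equal to $1$ would force a degeneracy (a coplanarity relation among the face normals) incompatible with $T$ being a genuine spherical tetrahedron, i.e.\ with $\det G>0$. The $-\tfrac12\sum(\log a_j)^2$ term is manifestly analytic near the unit circle since $\log$ is analytic off the negative real axis and $a_j\neq -1$.

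For the derivative formula, I would differentiate termwise. Using $z\,\Li'(z) = -\log(1-z)$, one gets for any monomial argument $w = c\,a_j^{\pm1}$ (with $c$ independent of $a_j$) that $a_j\,\partial_{a_j}\Li(w) = \mp\log(1-w)$. Each variable $a_j$ appears in exactly two of the four $\Delta_0$-blocks (this is the combinatorial content of Figure~1 / the $6j$-pattern), and within each such block it appears in all four dilogarithm arguments — twice with a $+1$ exponent and twice with a $-1$ exponent. Carrying this out, $4a_j\,\partial_{a_j}\Delta$ becomes an alternating sum of eight terms of the form $\pm\log(1 + (\text{unit-circle monomial}))$, plus the contribution $-\log a_j^2 = -2\log a_j$ from the quadratic term. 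The key point is then purely about imaginary parts: for a unit-circle number $e^{i\phi}$ with $\phi\in(-\pi,\pi)$ one has $\Imag\log(1 + e^{i\phi}) = \phi/2$ (since $1+e^{i\phi} = 2\cos(\phi/2)\,e^{i\phi/2}$ and $\cos(\phi/2)>0$). So each $\log(1+\text{monomial})$ contributes half the angle of that monomial, and the signs are arranged so that the monomial-angle contributions telescope to a combination of the $\theta_j$'s. I would track the bookkeeping carefully: the eight dilogarithm terms pair up so that their angle-contributions to the $j$-th derivative cancel against each other except for a residual, and $\Imag(-2\log a_j) = -2\theta_j$ is partly cancelled as well; the net surviving constant is $-2\pi$, independent of $j$ and of the point in $D_s$.

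The main obstacle is the sign/branch bookkeeping in the previous paragraph: one must be sure that every $\Imag\log(1+e^{i\phi})$ is evaluated with $\phi$ in the correct fundamental domain $(-\pi,\pi)$ — equivalently, that the relevant monomial is never equal to $-1$ on $D_s$, which is the same nondegeneracy used for analyticity — and one must get the eight $\pm$ signs from the four $\Delta_0$-blocks exactly right, since an error there changes the answer by a multiple of $\pi$. A clean way to organize this is to fix $j=1$ (by the symmetry of the configuration the other cases are identical up to relabeling), list the two $\Delta_0$-blocks containing $a_1$, write out the four arguments in each with the exponent of $a_1$ marked, apply $a_1\partial_{a_1}$, take imaginary parts using the $\phi/2$ rule, and add. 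I expect the six $\theta_k$-dependent pieces to cancel in pairs, leaving exactly $-2\pi$, which is the claim. Because both the analyticity and the derivative computation are local and hold on all of the open set $D_s$, no global or connectedness argument about $D_s$ is needed for this lemma.
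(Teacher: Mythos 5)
Your overall strategy is the same as the paper's: differentiate termwise using $z\,\Li'(z)=-\log(1-z)$, note that each $a_j$ occurs in exactly two $\Delta_0$-blocks with exponents $+1,+1,-1,-1$ among the four arguments, and compute imaginary parts of $\log(1+e^{i\phi})$ on the unit circle. But the decisive step is left undone, and as stated one of your claims about it is wrong. You assert that every $\Imag\log(1+e^{i\phi})$ should be ``evaluated with $\phi$ in the correct fundamental domain $(-\pi,\pi)$'' and that this is guaranteed by the monomial never equalling $-1$. Neither half of this is right. Knowing the monomial avoids $-1$ only tells you the branch is locally constant; it does not tell you \emph{which} branch. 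The paper pins this down with the inequalities \eqref{equation:admissible}: the three dihedral angles $\theta_u,\theta_v,\theta_w$ at a common vertex are the angles of the spherical triangle forming the vertex link, so $-\pi<\pm\theta_u\mp\theta_v\mp\theta_w<\pi$ but $\pi<\theta_u+\theta_v+\theta_w<3\pi$. Thus the argument of $\log(1+a_ja_pa_q)$ is \emph{not} in $(-\pi,\pi)$, and its imaginary part is $(\theta_j+\theta_p+\theta_q)/2-\pi$. Those two $-\pi$'s (one per block containing $a_j$), times the prefactor $4\cdot\frac14$, are the entire source of the constant $-2\pi$: if all eight angles really lay in $(-\pi,\pi)$ as you claim, the dilog terms would contribute $4\theta_j$, the quadratic term $-4\theta_j$, and the total would be $0$. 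So the ``small geometric input'' you defer is not a genericity or nondegeneracy remark — it is the specific spherical-triangle inequalities at each vertex, and without them the lemma cannot be proved (nor can the branch be fixed by connectedness, which you explicitly disclaim using).

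Two smaller points. First, your contribution from the quadratic term is off by a factor of two: $4a_j\,\partial_{a_j}\bigl(-\tfrac12(\log a_j)^2\bigr)=-4\log a_j$, with imaginary part $-4\theta_j$, not $-2\log a_j$; with your value the $\theta_j$-dependence would not cancel. Second, for analyticity your appeal to $\det G>0$ is not quite the right invariant; what is used is again \eqref{equation:admissible}, which shows the dilogarithm arguments $-e^{i(\pm\theta_u\pm\theta_v\pm\theta_w)}$ never cross the cut $[1,\infty)$ on $D_s$. The skeleton of your argument matches the paper, but the branch bookkeeping you flag as ``the main obstacle'' is exactly where the content of the lemma lives, and it is not resolved.
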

\begin{proof}
We show for the case $j=1$.  
For the function $\Delta$, 
\begin{equation*}
a_1 \, \dfrac{\partial \Delta}{\partial a_1}
=
a_1 \, \dfrac{\partial \Delta_0(a_1, a_2, a_3)}{\partial a_1}
+
a_1 \, \dfrac{\partial \Delta_0(a_1, a_5, a_6)}{\partial a_1}
-
\log a_1
\end{equation*}
and
\begin{multline*}
a_1 \, \dfrac{\partial \Delta_0(a_1, a_p, a_q)}{\partial a_1}
=
\\
\dfrac{1}{4} \, 
\left(\!\log(1 + \dfrac{a_1}{a_p  a_q}) 
- 
\log(1+\dfrac{a_p}{a_1  a_q})
- 
\log(1 + \dfrac{a_q}{a_1  a_p}) 
+ 
\log(1 + {a_1  a_p  a_q})\!\right)  
\end{multline*}
for $\{p,q\} = \{2,3\}$, $\{5,6\}$. 
The imaginary part $\Imag \log(1+e^{ i \theta})$ 
is given by
$$
\Imag \log(1 + e^{i\theta}) = \begin{cases}
\dfrac{\theta}{2} & \text{ if $-\pi < \theta < \pi$}, \\[12pt]
\dfrac{\theta}{2}-\pi & \text{ if $\pi < \theta < 3\, \pi$}.
\end{cases} 
$$
Let $\theta_u$, $\theta_v$, $\theta_w$ be three dihedral angles at three edges having a vertex in common.  
Then they satisfy
\begin{equation}
0 < \theta_u + \theta_v - \theta_w,\ \theta_u - \theta_v + \theta_w,\  -\theta_u + \theta_v + \theta_w
< \pi, 
\quad
\pi < \theta_u + \theta_v + \theta_w < 3\,\pi.  
\label{equation:admissible}
\end{equation}  
Hence $\Delta_0(a_1, a_p, a_q)$ is analytic on $D_s$ and we have
$$
\Imag \left(a_1 \, \dfrac{\partial \Delta_0(a_1, a_p, a_q)}{\partial a_1}\right)
=
\dfrac{\theta_1}{2} - \dfrac{\pi}{4},
\qquad
\Imag\left(4 \, a_1\, \frac{\partial \Delta}{\partial a_1}\right)
=
-2 \, \pi.
$$
Moreover,  $\Delta$ is analytic on $D_s$
because none of the imaginary parts of the log terms of $\Delta$ attains neither $\pi$ nor $-\pi$ on $D_s$.  
\end{proof}
\begin{lemma}
The function $L(a_1, a_2, \cdots, a_6, z_0(a_1, a_2, \cdots, a_6))$ 
 is analytic on $D_s$, and so 
 $U(a_1, a_2, \cdots, a_6, z_0(a_1, a_2, \cdots, a_6))$ 
 is analytic on $D_s$.  
\label{lemma:analytic}
\end{lemma}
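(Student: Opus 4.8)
The plan is to show that $z_0$ is analytic on $D_s$ with $|z_0| < 1$, and then to read off that every argument fed to a $\Li$ or a $\log$ in $L(a_1,\dots,a_6,z_0)$ avoids the corresponding branch cut. First, since $a_j - a_j^{-1} = 2\,i\,\sin\theta_j$, we have $q_1 = 4\big(\sin\theta_1\sin\theta_4 + \sin\theta_2\sin\theta_5 + \sin\theta_3\sin\theta_6\big)$, which is real and strictly positive on $(0,\pi)^6$; and since $|a_j| = 1$ on $D_s$ we have $a_j^{-1} = \overline{a_j}$, hence $q_2 = \overline{q_0}$ and $q_0 q_2 = |q_0|^2 \ge 0$. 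By \eqref{eq:discriminant} the discriminant in \eqref{eq:z} equals $16\det G$, which is strictly positive on $D_s$ because $T$ is spherical, so $\sqrt{q_1^2 - 4q_0q_2} = 4\sqrt{\det G}$ is a strictly positive, real-analytic function there. Rationalizing \eqref{eq:z} to $z_0 = 2\,q_0\big/\big(-q_1 - \sqrt{q_1^2 - 4q_0q_2}\big)$ exhibits $z_0$ as an analytic function on $D_s$ with nowhere-vanishing denominator (this form is also well behaved on the locus $q_0 = q_2 = 0$, if nonempty, where \eqref{eq:z} reads $0/0$ and $z_0 = 0$).

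The key estimate is $|z_0| < 1$: since $q_1 > 0$, squaring shows $q_1 > 2|q_0|$ exactly when $q_1^2 - 4q_0q_2 = 16\det G > 0$, which holds on $D_s$, so $|z_0| = 2|q_0|\big/\big(q_1 + \sqrt{q_1^2 - 4q_0q_2}\big) \le 2|q_0|/q_1 < 1$. Now each of the eight arguments of $\Li$ appearing in $L(a_1,\dots,a_6,z)$ is $z$ times $\pm 1$ times a monomial in the $a_j^{-1}$; at $z = z_0$ the coefficient is unimodular, so each such argument has modulus $|z_0| < 1$, in particular real part $< 1$, hence lies in $\mathbb C \setminus \{x \in \mathbb R \mid x \ge 1\}$, on which the principal branch $\Li$ is analytic. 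The remaining arguments are the $\log a_j = i\,\theta_j$ with $0 < \theta_j < \pi$, which avoid the negative real axis. Therefore $L(a_1,\dots,a_6,z_0)$ is a composition of analytic maps and is analytic on $D_s$; since $\Delta(a_1,\dots,a_6)$ is analytic on $D_s$ by Lemma \ref{lemma:imagdelta} and does not involve $z_0$, the sum $U(a_1,\dots,a_6,z_0) = L(a_1,\dots,a_6,z_0) + \Delta(a_1,\dots,a_6)$ is analytic on $D_s$ as well.

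The substantive input is the identity \eqref{eq:discriminant} between the discriminant and $16\det G$ (granted above), which turns the modulus bound $|z_0| < 1$ into a one-line computation; everything after that is bookkeeping about which monomials multiply $z_0$ and a glance at the branch cuts of $\Li$ and $\log$. The one spot deserving a little care is the possibly-degenerate locus $q_0 = q_2 = 0$, where the printed formula \eqref{eq:z} is $0/0$: passing to the rationalized expression for $z_0$ removes the apparent singularity and the estimate $|z_0| < 1$ still goes through, so no point of $D_s$ is lost.
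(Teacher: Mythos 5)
Your proof is correct and follows essentially the same route as the paper's: establish $|z_0|<1$ from the reality of $q_1$, the identity $q_0\,q_2=|q_0|^2$, and the positivity of the discriminant $16\det G$, then observe that every dilogarithm argument in $L(a_1,\dots,a_6,z_0)$ is a unimodular multiple of $z_0$ and hence lies in the open unit disk, on which $\Li$ is analytic. You are in fact somewhat more careful than the paper, which never states the needed positivity $q_1>0$ (without it the same three facts would force $|z_0|>1$) and does not address the analyticity of $z_0$ itself or the degenerate locus $q_0=0$; these additions are welcome but do not change the argument.
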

\begin{proof}
We know that
$
|z_0| < 1 
$
because,
for   $q_0$, $q_1$, $q_2$ in \eqref{eq:z}, 
$q_1$ is a real number and 
 $q_0 q_2 = q_0 \overline{q_0}=|q_0|^2$ is a positive real number, and $q_1^2 - 4 q_0q_2$ is also a positive real number by \eqref{eq:discriminant}.
This implies that, for $w \in \mathbb C$ with $|w|=1$,   
$|w\, z_0| < 1$.  
Noting that $\Li(z)$ is analytic on the unit open disk $\{z \in {\mathbb C} \mid |z| < 1\}$, all the dilog terms of $L$  are analytic on $D_s$
since $|a_1| = \cdots = |a_6| = 1$.  
\end{proof}
\begin{lemma}
The differential $\frac{\partial U}{\partial z}$ satisfies   
$
\left.z_0 \frac{\partial U}{\partial z}\right|_{z= z_0}
=
\pi \, i
$.
\label{lemma:eqz}
\end{lemma}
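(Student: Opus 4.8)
The plan is to observe that the left-hand side takes values in a fixed discrete set, and then to fix the value by evaluating it on one explicit tetrahedron. Since $\Delta$ does not involve $z$ we have $\frac{\partial U}{\partial z}=\frac{\partial L}{\partial z}$, so it suffices to show $z_0\,\frac{\partial L}{\partial z}\big|_{z=z_0}=\pi i$. Differentiating the dilogarithms in $L$ term by term, using $w\,\Li'(w)=-\log(1-w)$, gives the exact identity
\begin{multline*}
2\,z\,\frac{\partial L}{\partial z}
= -\log(1-z)
- \log\bigl(1-a_1^{-1}a_2^{-1}a_4^{-1}a_5^{-1}z\bigr)
- \log\bigl(1-a_1^{-1}a_3^{-1}a_4^{-1}a_6^{-1}z\bigr)\\
- \log\bigl(1-a_2^{-1}a_3^{-1}a_5^{-1}a_6^{-1}z\bigr)
+ \log\bigl(1+a_1^{-1}a_2^{-1}a_3^{-1}z\bigr)
+ \log\bigl(1+a_1^{-1}a_5^{-1}a_6^{-1}z\bigr)\\
+ \log\bigl(1+a_2^{-1}a_4^{-1}a_6^{-1}z\bigr)
+ \log\bigl(1+a_3^{-1}a_4^{-1}a_5^{-1}z\bigr),
\end{multline*}
all logarithms being principal branches; exponentiating recovers the displayed formula for $\exp\bigl(2z\,\frac{\partial L}{\partial z}\bigr)$. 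Since $z_0$ is a root of $\exp\bigl(2z\,\frac{\partial L}{\partial z}\bigr)=1$, the left-hand side at $z=z_0$ lies in $2\pi i\,\mathbb Z$, hence $z_0\,\frac{\partial L}{\partial z}\big|_{z_0}\in\pi i\,\mathbb Z$.

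I would next show this integer does not depend on the tetrahedron. By the proof of Lemma \ref{lemma:analytic}, $\abs{z_0}<1$, so for every unit-modulus coefficient $w$ above, $1\pm w z_0$ lies in the open disc of radius $1$ about $1$; in particular it avoids the cut $(-\infty,0]$, so each of the eight logarithms is analytic in $(\theta_1,\cdots,\theta_6)$ on $D_s$ with imaginary part in $(-\tfrac{\pi}{2},\tfrac{\pi}{2})$. Hence $(\theta_1,\cdots,\theta_6)\mapsto z_0\,\frac{\partial L}{\partial z}\big|_{z_0}$ is a continuous, $\pi i\,\mathbb Z$-valued, hence locally constant, function on $D_s$; as $D_s$ is connected it is constant, and it is enough to evaluate it at one point.

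For the evaluation I would take the right-angled regular spherical tetrahedron $\theta_1=\cdots=\theta_6=\tfrac{\pi}{2}$, i.e. the positive octant of $S^3$, whose Gram matrix is the identity so that it lies in $D_s$. Then $a_j=i$, each triple product $a_ia_ja_k$ occurring in $L$ equals $i^3=-i$ and each quadruple product $a_ia_ja_ka_l$ equals $i^4=1$, so $q_0=-4-4i$, $q_1=12$, $q_2=-4+4i$, $q_1^2-4q_0q_2=16=16\det G$, and $z_0=\tfrac{1+i}{2}$. The quadruple-product inverses are then all $1$ and the triple-product inverses all $i$, while $1-z_0=\tfrac{1-i}{2}$ and $1+iz_0=\tfrac{1+i}{2}$, so the eight logarithmic terms collapse to $-4\log\tfrac{1-i}{2}+4\log\tfrac{1+i}{2}$; since $1\mp i=\sqrt{2}\,e^{\mp i\pi/4}$ this equals $2\pi i$. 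Therefore $2\,z_0\,\frac{\partial L}{\partial z}\big|_{z_0}=2\pi i$ at this tetrahedron, and by the constancy just established $z_0\,\frac{\partial U}{\partial z}\big|_{z_0}=z_0\,\frac{\partial L}{\partial z}\big|_{z_0}=\pi i$ everywhere on $D_s$.

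The step needing the most care is the connectedness of $D_s$ used above, which is what promotes ``locally constant'' to ``constant''; if one wished to avoid it, one could keep the explicit logarithmic expression and check that $\Imag\bigl(2z_0\,\frac{\partial L}{\partial z}\big|_{z_0}\bigr)$ — which the bounds above confine to $\{-2\pi,0,2\pi\}$ — is neither $0$ nor $-2\pi$, but the connectedness argument is cleaner and the real computational content is the evaluation at $\theta_j=\tfrac{\pi}{2}$.
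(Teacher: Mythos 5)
Your proof is correct and follows essentially the same route as the paper: observe that $z_0\,\frac{\partial U}{\partial z}\big|_{z=z_0}=z_0\,\frac{\partial L}{\partial z}\big|_{z=z_0}$ lies in $\pi i\,\mathbb{Z}$ because $z_0$ solves $\exp\bigl(2z\,\frac{\partial L}{\partial z}\bigr)=1$, argue by analyticity that this value is constant over $D_s$, and pin it down by the explicit computation at the regular right-angled tetrahedron $\theta_j=\pi/2$, where both you and the paper obtain $\tfrac12\bigl(-4\log\tfrac{1-i}{2}+4\log\tfrac{1+i}{2}\bigr)=\pi i$. Your write-up merely makes explicit two points the paper leaves implicit, namely the term-by-term logarithmic derivative of $L$ and the connectedness of $D_s$ needed to pass from locally constant to constant.
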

\begin{proof}
Since $\frac{\partial U}{\partial z} = \frac{\partial L}{\partial z}$ and $z_0$ is a solution of the equation \eqref{eq:eqz}, 
$
\left.z_0 \, \frac{\partial U}{\partial z}\right|_{z= z_0}$
$=
k\, \pi \, i
$
for some integer constant $k$ because $U$ is analytic on $D_s$ by the above lemma.  
Let $T_{\frac{\pi}{2}}$ be the regular spherical tetrahedron with edge lengths $\pi/2$.  Then $\theta_j = \pi/2$,  $a_j = i$ for $j=1, \cdots, 6$, $z_0 = (i+1)/2$ and
$$
\left.z_0 \, \dfrac{\partial U}{\partial z}\right|_{z= z_0}
=
\dfrac{1}{2} \, \left(-4\,\log \dfrac{1-i}{2} + 4\,\log \dfrac{1+i}{2}\right)
=
\pi \, i.  
$$
Hence $\left.z_0  \frac{\partial U}{\partial z}\right|_{z= z_0} = \pi i$ 
for all the spherical tetrahedron.  
\end{proof}
Now, we show the following proposition for $V$ corresponding to the Schl\"afli differential equality 
\begin{equation}
d\, \Vol(T) 
= 
\sum_{j=1}^6
\dfrac{l_j}{2} \,  d  \theta_j,  
\end{equation}
which is a fundamental tool to analize the volume.  
For example, see \cite{Mi}. 
\begin{prop}
The function $V$ satisfies
$
\frac{\partial V}{\partial \theta_j}
= 
{l_j}/{2}$ for $j=1, 2, \cdots, 6$.  
\label{prop:lengths}
\end{prop}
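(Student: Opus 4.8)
The plan is to compute $\partial V/\partial\theta_j$ directly from the definition of $V$, using the chain rule together with the analyticity statements of Lemmas~\ref{lemma:imagdelta}, \ref{lemma:analytic} and the critical-point identity of Lemma~\ref{lemma:eqz}. Writing $V = -U(a_1,\dots,a_6,z_0) + \pi i(\log z_0 - \sum_j \log a_j) - \tfrac{13}{6}\pi^2$, and noting that $a_j = e^{i\theta_j}$ so $\partial a_j/\partial\theta_j = i\,a_j$ and $\partial\log a_j/\partial\theta_j = i$, I would differentiate term by term. Since $z_0 = z_0(a_1,\dots,a_6)$ is itself a function of the angles, the chain rule produces, for each fixed $j$,
\begin{equation*}
\frac{\partial V}{\partial\theta_j}
= -\,i\,a_j\,\frac{\partial U}{\partial a_j}\Big|_{z=z_0}
\;-\;\Big(\frac{\partial U}{\partial z}\Big|_{z=z_0} - \frac{\pi i}{z_0}\Big)\frac{\partial z_0}{\partial\theta_j}
\;+\;\pi i\Big(\frac{1}{z_0}\frac{\partial z_0}{\partial\theta_j} - i\Big).
\end{equation*}

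The key simplification is that the coefficient of $\partial z_0/\partial\theta_j$ vanishes: by Lemma~\ref{lemma:eqz} we have $z_0\,\partial U/\partial z|_{z=z_0} = \pi i$, i.e. $\partial U/\partial z|_{z=z_0} = \pi i/z_0$, so the two $\partial z_0/\partial\theta_j$ contributions from the $-U$ term and the $\pi i\log z_0$ term cancel exactly. This is the crucial point and the reason $z_0$ was defined as a solution of \eqref{eq:eqz}: the total derivative of $V$ in the angles equals its partial derivative holding $z$ fixed at $z_0$. After this cancellation I am left with
\begin{equation*}
\frac{\partial V}{\partial\theta_j}
= -\,i\,a_j\,\frac{\partial U}{\partial a_j}\Big|_{z=z_0} + \pi.
\end{equation*}

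It then remains to identify the right-hand side with $l_j/2$. Here I would split $U = L + \Delta$ and use Lemma~\ref{lemma:imagdelta}: since $V$ is real-valued (being a volume-type quantity) one really needs the real part, and $-i a_j\,\partial U/\partial a_j$ has real part equal to $\Imag(a_j\,\partial U/\partial a_j)$. From Lemma~\ref{lemma:imagdelta}, $\Imag(4a_j\,\partial\Delta/\partial a_j) = -2\pi$, so the $\Delta$-contribution together with the $+\pi$ constant telescopes neatly. The remaining work is to show that $\Imag\big(a_j\,\partial L/\partial a_j\big)\big|_{z=z_0}$, after adding the leftover $\pi/2$ from the $\Delta$ term and the explicit $+\pi$, equals $l_j/2$. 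This is where I expect the main obstacle to lie: one must compute $2a_j\,\partial L/\partial a_j$ explicitly, obtaining a logarithm of a rational function of $a_1,\dots,a_6,z_0$ (mirroring the product formula already given for $\exp(2z\,\partial L/\partial z)$), and then evaluate its imaginary part at $z=z_0$. The expression $z_0$ is, by \eqref{eq:z} and \eqref{eq:discriminant}, an algebraic quantity built from the $q_i$ and $\sqrt{\det G}$; feeding this in, one should recover precisely formula \eqref{eq:ls} for $\exp(2il_j)$ — that is, the argument of the resulting rational function equals $2l_j$ up to a controlled multiple of $2\pi$. Pinning down that integer ambiguity (as in the proof of Lemma~\ref{lemma:eqz}) by evaluating at the regular tetrahedron $T_{\pi/2}$, where $\theta_j = l_j = \pi/2$, closes the argument.
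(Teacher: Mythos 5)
Your proposal is correct and follows essentially the same route as the paper: Lemma~\ref{lemma:eqz} cancels the $\partial z_0/\partial\theta_j$ contributions, and $a_j\,\partial U/\partial a_j|_{z=z_0}$ is then identified with $i\,(l_j-2\pi)/2$ by exponentiating it into a rational function, comparing with \eqref{eq:ls}, and fixing the integer ambiguity at the regular tetrahedron $T_{\frac{\pi}{2}}$. The only blemish is a bookkeeping slip in your displayed chain rule, which double-counts the term $\pi i\, z_0^{-1}\,\partial z_0/\partial\theta_j$; the formula you state after the cancellation, $\partial V/\partial\theta_j = -i\, a_j\,\partial U/\partial a_j|_{z=z_0} + \pi$, is nonetheless the correct one.
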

\begin{proof}
Let 
$
\varphi = 
\exp\Big(4 \, a_1 \, \frac{\partial \Delta}{\partial a_1}\Big)
$
and
$
\psi =
 \exp\Big(\left.2 \, a_1 \, \frac{\partial L}{\partial a_1}\right|_{z = z_0}\Big)
 $,
 then 
 $$
 \varphi = \frac{(a_1+a_2\, a_3) (a_1\,  a_2\,  a_3+1)
   (a_1+a_5 \, a_6) (a_1 \, a_5 \, a_6 +1)}{(a_1 \, a_2 +a_3) (a_1 \,  a_3 +a_2) (a_1 \,  a_5 +a_6)
   (a_1\, a_6+a_5)}, 
   $$
   $$
 \psi = 
   \frac{(a_1 \, a_2 \, a_4 \, a_5  -z_0) (a_1 \, a_3 \, a_4 \, a_6 - z_0)}{a_4\,  (a_1 \, a_2 \, a_3   +z_0) (a_1\,  a_5 \, a_6  
   + z_0)}.    
  $$
An actual computation and \eqref{eq:ls} show that 
\begin{multline*}
\left.\exp\left(\!4  a_1  \dfrac{\partial U}{\partial a_1}\!\right)
\right|_{z = z_0}
\!\!\!\!
  =
\varphi \psi^2
=
\dfrac{2 \, c_{34}^2 -  c_{33} c_{44} + 2 i   c_{34}  \sqrt{\det G} \, \sin \theta_1}{c_{33}\, c_{44}} 
=
\exp(2 \, l_1 \, i).  
\end{multline*}
Hence we get
$
\left.a_1  \frac{\partial U}{\partial a_1}\right|_{z=z_0}
=
i \, (l_1 + k\,  \pi)/2
$
for some integer constant $k$  
because $U$ is analytic on
$D_s$ by Lemma \ref{lemma:analytic}. 
For the tetrahedron $T_{\frac{\pi}{2}}$ given in the proof of Lemma \ref{lemma:eqz}, 
$
l_1= \pi/2
$ 
and 
$
\left.a_1  \frac{\partial U}{\partial a_1}\right|_{z=z_0} 
= 
-3\,\pi\, i/4
$, 
which means that 
$k=-2$   
and
$
\left.a_1 \, \frac{\partial U}{\partial a_1}\right|_{z=z_0} 
= 
i\, (l_1 - 2\, \pi)/2.  
$
According to 
$\frac{\partial U}{\partial \theta_1}
=
i  a_1  \frac{\partial U}{\partial a_1}$, 
we have
\begin{equation}
\left.\dfrac{\partial U}{\partial \theta_1}\right|_{z=z_0} 
= 
\dfrac{1}{2}\, (2\, \pi-l_1).
\label{eq:length}
\end{equation}
Therefore
\begin{multline*}
\dfrac{\partial}{\partial \theta_1} \left(
-U\big(a_1, \cdots, a_6, z_0(a_1, \cdots, a_6)\big) 
+ \pi \, i \,  \Big( \log z_0 - \sum_{j=1}^6\, \log a_j\Big)
\right)
=
\\
\dfrac{l_1}{2}-
\left.
\dfrac{\partial z_0}{\partial \theta_1}\, \dfrac{\partial U}{\partial z}
\right|_{z=z_0}
+
\pi \, i\, \dfrac{\partial z_0}{\partial \theta_1} 
\,
\dfrac{1}{z_0 }.
 \end{multline*}
Since 
$ \left.
 \frac{\partial U}{\partial z}\right|_{z=z_0} =
{i \, \pi}/{z_0}$
 by Lemma \ref{lemma:eqz}, 
 we get
 $
\frac{\partial V}{\partial \theta_1}
=
 l_1/2
$.  
\end{proof}
\subsection{Proof of the formula in terms of dihedral angles}
We first give a formula by complex analytic functions.  
\begin{prop}
Let $T$ be a spherical tetrahedron with dihedral angles $\theta_1$, $\theta_2$, $\cdots$, $\theta_6$ at the edges $e_1$, $e_2$, $\cdots$ $e_6$ as in Figure \ref{figure:tetrahedron}.  
Let $a_j = e^{i\theta_j}$ for $j = 1$, $2$, $\cdots$, $6$ as before and
let $\Vol(T)$ be the volume of $T$.  
Then 
$$
\Vol(T) = V(a_1, a_2, a_3, a_4, a_5,  a_6) \ \mod \ 2\, \pi^2.
$$  
\label{prop:edge}
\end{prop}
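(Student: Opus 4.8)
The plan is to use the Schl\"afli differential equality as the engine and the regular tetrahedron $T_{\frac{\pi}{2}}$ as the anchor, so that Proposition \ref{prop:lengths} does most of the work. First I would observe that on the open set $D_s$ the function $V(a_1,\dots,a_6)$ is analytic: by Lemma \ref{lemma:analytic} the term $U(a_1,\dots,a_6,z_0)$ is analytic on $D_s$, and $\log z_0$ is analytic there since $z_0$ is given by a fixed branch of a square root whose radicand $q_1^2-4q_0q_2 = 16\det G$ is a positive real number by \eqref{eq:discriminant} (so $z_0$ stays away from the branch cuts of the logarithm as well, being of modulus $<1$). Hence $V$ is a well-defined analytic, in particular real-analytic, function of $(\theta_1,\dots,\theta_6)$ on $D_s$, and the same is true of $\Vol(T)$ by the smooth dependence of a spherical tetrahedron on its dihedral angles.

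Next I would combine Proposition \ref{prop:lengths}, which gives $\partial V/\partial\theta_j = l_j/2$ for $j=1,\dots,6$, with the Schl\"afli differential equality $d\Vol(T) = \sum_{j=1}^6 (l_j/2)\,d\theta_j$. Subtracting, the real-analytic function $\Vol(T) - \Real V(a_1,\dots,a_6)$ has vanishing gradient on the connected set $D_s$, hence is a (real) constant $C$ on $D_s$. Actually one wants to be slightly careful: Proposition \ref{prop:lengths} asserts equality of the \emph{complex} derivative $\partial V/\partial\theta_j$ with the real quantity $l_j/2$, so in particular $\partial(\Imag V)/\partial\theta_j = 0$ and $\partial(\Real V)/\partial\theta_j = l_j/2$; thus it is legitimate to work with $\Real V$ and conclude $\Vol(T) = \Real V + C$ on $D_s$. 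To finish I would evaluate both sides at the regular tetrahedron $T_{\frac{\pi}{2}}$ with all $\theta_j=\pi/2$, all $a_j=i$, $z_0=(1+i)/2$, whose volume is known ($\Vol(T_{\frac{\pi}{2}}) = \pi^2/10$, the volume of the $600$-cell-related regular simplex); computing $V$ at this point — which is a finite sum of dilogarithm and logarithm values at explicit points like $\Li(\pm i)$, $\Li(\pm 1)$, etc. — pins down $C$, and the computation is arranged so that $C \equiv 0 \pmod{2\pi^2}$, giving $\Vol(T) \equiv V(a_1,\dots,a_6) \pmod{2\pi^2}$.

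The main obstacle I expect is twofold. First, there is the bookkeeping in verifying that $C=0$: this requires correctly identifying the branches of all the dilogarithms appearing in $U(i,i,i,i,i,i,(1+i)/2)$ and in the $\Delta$-terms, assembling the constant $-\tfrac{13}{6}\pi^2$ together with $\pi i(\log z_0 - \sum\log a_j)$, and checking the numerical value reduces to $\pi^2/10$ modulo $2\pi^2$. Dilogarithm identities such as $\Li(z)+\Li(1-z) = \pi^2/6 - \log z\log(1-z)$ and the inversion/reflection formulas will be needed, and sign errors here are easy to make. Second, one should make sure $D_s$ is connected (so that the "constant gradient implies constant" step is valid globally on $D_s$, not just on a component); this is geometrically clear since the space of spherical tetrahedra is parametrized by an open convex-like region cut out by the positivity of $\det G$ and of the appropriate principal minors, but it deserves an explicit remark. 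Modulo these two points, the proof is a clean application of Schl\"afli plus one base-case computation.
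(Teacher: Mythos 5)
Your overall strategy --- Schl\"afli plus Proposition \ref{prop:lengths}, anchored at the regular tetrahedron $T_{\frac{\pi}{2}}$ --- is exactly the paper's, but two of your steps are genuinely wrong. First, the claim that $\log z_0$, and hence $V$, is analytic (single-valued) on all of $D_s$ does not hold: having $|z_0|<1$ does \emph{not} keep $z_0$ away from the branch cut of the principal logarithm, which is the negative real axis and meets the open unit disk in $(-1,0)$. In fact $z_0$ is a negative real number exactly when $q_2$ is a positive real (the numerator of $z_0$ is a negative real), and the paper's Remark after Theorem 1.1 records that $V$ is discontinuous precisely there. This is not a technicality you can wave away: if $V$ were globally analytic on $D_s$ your argument would prove the \emph{exact} equality $\Vol(T)=V$, contradicting that remark, and the whole reason the proposition is stated modulo $2\pi^2$ is that the term $\pi i\log z_0$ is only well defined up to $2k\pi^2$ under analytic continuation. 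The paper's route is: identify $V$ with $\Vol$ on a neighborhood of $T_{\frac{\pi}{2}}$ using Proposition \ref{prop:lengths} and Schl\"afli, then observe that $\Vol$ is analytic on all of $D_s$ and every term of $V$ except $\pi i\log z_0$ is single-valued there, so $\Vol$ is an analytic continuation of $V$ that can differ from the displayed expression only by $2k\pi^2$. Your ``vanishing gradient on a connected set'' argument breaks across the discontinuity locus of $V$; the issue is not the connectedness of $D_s$ but the branch jumps of $\log z_0$ inside it.

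Second, your anchor value is incorrect: $T_{\frac{\pi}{2}}$, the regular spherical tetrahedron with all edge lengths (and all dihedral angles) equal to $\pi/2$, is the intersection of $S^3$ with the positive orthant of $\mathbb R^4$, hence one-sixteenth of $S^3$, so $\Vol(T_{\frac{\pi}{2}})=2\pi^2/16=\pi^2/8$, not $\pi^2/10$ (the value $\pi^2/10$ does not correspond to this tetrahedron; the $600$-cell simplex has dihedral angle $2\pi/5$, not $\pi/2$). With the wrong base value your constant $C$ would not come out to $0$. You also defer the verification that $V(i,\dots,i)=\pi^2/8$ to ``bookkeeping''; the paper asserts it without detail as well, so that omission is comparable, but the base value itself must be $\pi^2/8$ for the constants in the definition of $V$ (in particular the $-\tfrac{13}{6}\pi^2$) to work out.
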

\begin{proof}
For the tetrahedron $T_{\frac{\pi}{2}}$ in the proof of Lemma \ref{lemma:eqz}, we have $a_j= i$, $z_0 = \frac{1 + i}{2}$ and  $V(i, i, i, i, i, i, i) = \pi^2/8 = \Vol(T_{\frac{\pi}{2}})$ since $T_{\frac{\pi}{2}}$ is one-sixteenth of $S^3$ and the volume of $S^3$ with radius $1$ is  $2 \pi^2$.  
Because $V$ is analytic on some neighborhood $N$ of $T_{\frac{\pi}{2}}$ in $D_s$, two functions $V$ and $\Vol$ are identical on $N$ by Proposition \ref{prop:lengths} and Sch\"afli differential equality.  
Moreover, $\Vol$ is analytic on $D_s$ and so it is given by an adequate analytic continuation of $V$.  
We already showed in previous lemmas that all the terms in $V$ except \  $\pi  i  \log z_0$ \ are analytic on $D_s$, and the analytic continuation of \ 
$\pi i  \log z_0$ is  \ 
$
\pi  i  \log z_0+ 2 k \pi^2
$ \ 
for some integer $k$.  
Hence we get the proposition.  
\end{proof}
\par\noindent
{\it Proof of Theorem 1.1}.  
We prove Theorem 1.1 by investigating the real part of $V$.  
For $\theta \in [0, 2\, \pi] \subset \mathbb R$,
the real part of $\Li(e^{i\theta})$ is given by
$
\Real\big(\Li(e^{i\theta})\big) 
=
\Real\big(\Li(e^{-i\theta})\big) 
=
\theta^2/4 - \pi \, \theta/2 +
{\pi^2}/{6}
$.  
Substituting this to each dilog function of $\Real(\Delta(a_1$, $a_2$, $\cdots$, $a_6))$, we get
$
\Real(\Delta(a_1, a_2, \cdots, a_6))
=
 - 2 \, \pi^2 /3 
+
\sum_{j=1}^6 \pi \theta_j/2
$
by using \eqref{equation:admissible}.
We also know that $\Imag \log z_0 = -\arg (-q_2)$ since the numerator of $z_0$ in \eqref{eq:z} is a negative real number.  
Hence we get Theorem 1.1 from Proposition \ref{prop:edge}.  
\qed
\begin{remark}
The function $V$ is non-continuous at the points where the values of $q_2$ are positive real numbers.  
\end{remark}
\subsection{Proof of the formula in terms of edge lengths}
We use the notations in Subsection 2.2.  
\par\noindent{\it Proof of Theorem 1.2.}
Let $\theta_1$, $\theta_2$, $\cdots$, $\theta_6$ be the dihedral angles at the edges $e_1$, $e_2$, $\cdots$, $e_6$ of $T$ and
let  $T^*$ be the dual tetrahedron of $T$ given by
\cite[p.294]{Mi}.  
Then the  dihedral angles of $T^*$ are $\pi- l_4$, $\pi - l_5$, $\pi - l_6$, $\pi - l_1$, $\pi - l_2$, $\pi -l_3$ and  
the edge length of $T^*$ are $\pi- \theta_4$, $\pi - \theta_5$, $\pi - \theta_6$, $\pi - \theta_1$, $\pi -\theta_2$, $\pi -\theta_3$.
The relation of volumes of $T$ and $T^*$ is given by  \cite[p.294]{Mi} as follows:  
$$
\Vol(T) + \Vol(T^*) + \dfrac{1}{2} \,
\sum_{j=1}^6 \, l_j \, (\pi-\theta_j)
=
\pi^2.      
$$
By Theorem 1.1, we have
\begin{multline*}
\Vol(T^*) 
=
-
\Real(\widetilde L(b_1, b_2, \cdots, b_6, \widetilde{z}_0))
+ \pi\,\left(\arg (-\widetilde q_2) + \dfrac{1}{2}\, \sum_{j=1}^6 (\pi - l_j)\right)
-
\dfrac{3}{2}\, \pi^2
\\
\mod 2\, \pi^2.
\end{multline*}
Because 
$\left.\frac{\partial }{\partial (\pi - l_j)}U(-b_4^{-1}, -b_5^{-1}, -b_6^{-1}, -b_1^{-1}, -b_2^{-1}, -b_3^{-1}, z) \right|_{z = \widetilde z_0}
=
\big( 2\, \pi-(\pi - \theta_j)\big)/2$ 
by \eqref{eq:length}
 and
 $
 \frac{\partial}{\partial (\pi - l_j)}\Real\left(\Delta(-b_4^{-1}, -b_5^{-1}, -b_6^{-1}, -b_1^{-1}, -b_2^{-1}, -b_3^{-1})\right) = \pi/2$ by Lemma \ref{lemma:imagdelta}, 
 we know that
 $
 \left.\frac{\partial \, \Real\widetilde L}{\partial l_j}\right|_{z = \widetilde z_0}
 =
-\theta_j/2.  
 $
Hence
\begin{multline*}
\Vol(T)
=
\Real\big(\widetilde L(b_1, b_2, \cdots, b_6, \widetilde{z}_0)\big)
- \pi  \, \arg (-\widetilde q_2)
\\
- \sum_{j=1}^6\,  l_j \, \left.\dfrac{\partial \,\Real\big(\widetilde L(b_1, b_2, \cdots, b_6,{z})\big)}{\partial l_j}\right|_{z = \widetilde z_0}
-
\dfrac{1}{2} \, \pi^2
\mod 2\, \pi^2, 
\end{multline*}
and we get Theorem 1.2.  
\qed

\end{document}